\newtheorem{remark}{Remark}[section]
\newtheorem{theorem}{Theorem}[section]
\newtheorem{definition}{Defintion}[section]
\begin{document}

\author{Hanbing Liu $^1$\\ { \small $^1$School of Mathematics and Physics of China University of
Geoscience,} \\{\small Wuhan, 430074, P.R.China. E-mail:
hanbing272003@aliyun.com}}
\title
 {Boundary sampled-data feedback stabilization for parabolic equations}
 \date{}
 \maketitle


 \hrule
\begin{abstract}
The aim of this work is to design an explicit finite dimensional boundary feedback controller of sampled-data form for locally exponentially stabilizing the equilibrium solutions to semilinear parabolic equations. The feedback controller is expressed in terms of the eigenfunctions corresponding to unstable eigenvalues of the linearized equation. This stabilizing procedure is applicable for any sampling rate, not necessary to be small enough, and it tends to the continuous-times version when the sampling period tends to zero.
\end{abstract}

 \noindent
 \keywords{Parabolic equations; sampled-data control; boundary feedback stabilization}

\section{Introduction}
\label{}
In this work, we aim to design an explicit finite-dimensional boundary feedback control of sampled-data form, to stabilize the equilibrium solutions $y_e\in C^2(\bar{\Omega})$ to the parabolic equation
\begin{equation}\label{e101}
 \left\{\begin{array}{ll}
 \frac{\partial y}{\partial t}= \Delta y+ f(x,y), \ \ \ \ \mathrm{in} \ (0, +\infty)\times\Omega,\\
 y=u \ \mathrm{on}\ (0, +\infty)\times\Gamma_1, y=0 \ \mathrm{ on}\ I\times\Gamma_2,\\
  y(0,x)=y_0(x),\ \ \ \mathrm{in} \
  \Omega.
\end{array}\right.
\end{equation}
where $\Omega$ is a bounded and open domain of $\mathbb{R}^d$ with smooth boundary $\partial \Omega=\Gamma_1\cup\Gamma_2$, $\Gamma_1, \Gamma_2$ being connected parts of $\partial \Omega$.
$T>0$ is the sampling period. By sampled-data control, we mean that the control is time-discrete function. More exactly, it is of the form
\begin{equation}\label{e102}
u(t,x)=\sum_{i=1}^{\infty}\chi_{[iT,(i+1)T)}(t)u_i, u_i\in L^2(\Gamma_1).
\end{equation}
Here $\chi_{[iT,(i+1)T)}(\cdot)$ is the characteristic function of interval $[iT,(i+1)T)$ for $i=1,2, \cdots$. The positive number $T$ is called the sampling period.

The existence of a boundary controller which stabilize the linear parabolic equation was firstly established in \cite{01}, where a conceptual procedure to construct the boundary controller was given, but without giving an explicit form. V. Barbu firstly introduced in his works \cite{02, 03, 04} a technique to construct a stabilizing boundary feedback controller, which is of finite dimensional and in an explicit form. However, these works based on an assumption that the normal derivatives of the eigenfunctions corresponding to the unstable eigenvalues are linearly independent. This method was used to construct explicit controllers to stabilize different kinds of equations in \cite{06, 07, 08}. Later on, I. Munteanu developed a delicate approach in \cite{09} to construct a similar type of boundary feedback controller to stabilize the parabolic equations, where he dropped the assumption imposed by V. Barbu. H. Liu et. al. applied this control to Fisher's equation in \cite{091}, and proved that it can locally stabilize this kind of semilinear parabolic equations. In last decade, a different approach to construct explicit boundary feedback controller to stabilize the 1-D linear and nonlinear parabolic equations, which is the so-called backstepping method, was developed. We cite here \cite{10, 11, 12,13}.

The adapting of the sampled-data control becomes more and more popular with the developing of digital technique. Sampled-data feedback stabilization for linear and nonlinear parabolic equations were studied in \cite{14, 15, 16, 17}. Among these works, \cite{14} showed that two kinds of sampled-data boundary feedback, which emulate the reduced model design and the back-stepping design respectively, can stabilize the 1-D linear parabolic equations when the length of the sampling interval is small enough; \cite{15, 16} concerned about the length of sampling interval that preserves the stability of the closed-loop system with proportional feedback; while \cite{17} provided a way to construct an output feedback control to stabilize the heat equations for arbitrarily given sampling period.

In this work, we shall develop the technique introduced in \cite{09} and \cite{091} to construct a sampled-data feedback control for stabilizing the semilinear parabolic equations. Several novelties of this work should be stressed. Firstly, the equation under consideration is of multi-dimension, and with polynomial-like nonlinearity, while the works \cite{14, 15, 16} considered 1-D linear equations or semilinear equations where the nonlinear part is globally Lipschiz; Secondly, the feedback is of finite dimensional and in an explicit form, and it stabilizes the equation for arbitrarily given sampling period, not necessarily to be small enough as that in  \cite{14, 15, 16}; Lastly, we can see the behavior of the feedback with respect to the sampling period, and the sampled-data feedback control tends to the continuous-time feedback control constructed in \cite{09} and \cite{091} when the sampling period goes to zero.

The rest of this paper is organized as follows. In Section 2, we shall construct the feedback control, and prove that it stabilizes the linearized equation. In Section 3, we present our main result, that is, the feedback control constructed in Section 2 also stabilizes locally the semilinear parabolic equations. In Section 4, the numerical examples will be given and the dependence of the sampling period will be analyzed.

\section{Stabilization of the linearized equation}
\setcounter{equation}{0}
\subsection{Notations and the well-posedness of the equation}
Everywhere in the following, we shall assume that\\
(i) $f, f_y \in C(\bar{\Omega}, \mathbb{R}).$\\
In particular, this implies that $x\rightarrow f_y(x,y_e(x))$ is continuous in $\bar{\Omega}$.

We define the linear operator $A: D(A)(:= H^2(\Omega)\cap H_0^1(\Omega))\rightarrow L^2(\Omega)$ by
$$Ay=-\Delta y-f_y(x,y_e)y, \forall y\in D(A).$$
We assume that the operator $A$ has at least one negative eigenvalue. Since the resolvent of $A$ is compact, it has countable set of eigenvalues. Given $\rho>0$, let $\{\lambda_j\}_{j=1}^\infty$, with
\begin{equation}\label{e202}
\lambda_1\leq\lambda_2\leq\cdots \leq\lambda_N< \rho\leq \lambda_{N+1}\leq\cdots
\end{equation}
 be the family of all  eigenvalues of $A$ and let
 $\{\phi_j\}_{j=1}^\infty$ be the family of the corresponding
 eigenfunctions, which forms an  orthonormal basis of $L^2(\Omega)$.

In the rest of the paper, we shall denote by $\|\cdot\|$, $\|\cdot\|_s$, $|\cdot|_0$ and $|\cdot|_N$ the norms of $L^2(\Omega)$, $H^s(\Omega)$, $L^2(\Gamma_1)$ and  $\mathds{R}^N$ respectively. The inner products in $L^2(\Omega)$, $L^2(\Gamma_1)$ and Euclid space $\mathds{R}^N$ will be denoted by $\langle \cdot, \cdot \rangle$, $\langle\cdot, \cdot\rangle_0$  and  $\langle \cdot, \cdot \rangle_N$, respectively. We shall write $Q=\Omega\times (0,\infty)$ for simplicity, and the variables $x,t$ will be omitted in the case of no ambiguity. For each $M\in\mathds{N}^+$,  let $X_M=\mathrm{span}\{\phi_i\}_{i=1}^M$, and let $P_M$
   be the orthogonal projection from $L^2(\Omega)$ onto $X_M$. We denote by $Q_N: L^2(\Omega)\rightarrow \mathbb{R}^N$ the operator $Q_N(y)=(\langle y, \phi_1\rangle, \cdots, \langle y, \phi_N\rangle)'$. Here $B'$ stands for the transposition of the matrix of $B$.

We give firstly the notion of the solution to the linearized equation with sampled-data Dirichlet boundary condition.
\begin{definition}\label{def01}
Let $\tilde{y}_0\in L^2(\Omega)$, and $v(t)=\sum_{i=1}^{\infty}\chi_{[iT,(i+1)T)}(t)v_i$, $v_i\in L^2(\Gamma_1)$ be given. A solution of the equation
\begin{equation}\label{e201}
 \left\{\begin{array}{ll}
 \frac{\partial y}{\partial t}= \Delta y+ f_y(x,y_e)y, \ \ \ \ \mathrm{in} \ (0,\infty)\times\Omega,\\
 y=v \ \mathrm{on}\ (0,\infty)\times\Gamma_1, y=0 \ \mathrm{ on}\ (0,\infty)\times\Gamma_2,\\
  y(0,x)=\tilde{y}_0(x),\ \ \ \mathrm{in} \
  \Omega.
\end{array}\right.
\end{equation}
is a function $y\in C_b([0, +\infty); L^2(\Omega))$, such that, for every $\tau\in [0, +\infty)$ and for every $\zeta\in W^{1,2}([0,T]; L^2(\Omega))\cap (L^2(0, T); D(A))$, one has
\begin{eqnarray}\label{e2001}
&&-\int_0^\tau\int_\Omega(\zeta_t-A\zeta)ydxdt-\int_0^\tau\int_{\Gamma_1}v\frac{\partial\zeta}{\partial n}dxdt\nonumber\\
&&+\int_\Omega y(\tau)\zeta(\tau)dx-\int_\Omega \tilde{y}_0\zeta(0)dx=0.
\end{eqnarray}
\end{definition}

For the well-posedness of (\ref{e201}), we have the following result.
\begin{theorem}\label{th201}
Under assumption (i), for given $\tilde{y}_0\in L^2(\Omega) $, and discrete-time function $v(t,x)=\sum_{i=0}^{\infty}\chi_{[iT,(i+1)T)}(t)v_i, v_i\in L^2(\Gamma_1)$, the equation (\ref{e201}) has a unique solution.
\end{theorem}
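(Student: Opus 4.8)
The plan is to rewrite (\ref{e201}) as an abstract Cauchy problem driven by the analytic semigroup generated by $-A$, and to exploit that the sampled-data control $v$ is constant in time on each window $[iT,(i+1)T)$, so that the solution can be built successively on these windows and then glued. Under assumption (i) the potential $f_y(\cdot,y_e)$ is a bounded, real multiplication operator, hence $A$ is a bounded self-adjoint perturbation of the Dirichlet Laplacian: $-A$ generates a self-adjoint analytic $C_0$-semigroup $e^{-tA}$ on $L^2(\Omega)$ with $\|e^{-tA}\|\le e^{-\lambda_1 t}$, and $\{\phi_j\}$ diagonalizes it. Fix $\mu>-\lambda_1$ so that $A+\mu I$ is positive and boundedly invertible, and introduce the Dirichlet map $D\in\mathcal L(L^2(\Gamma_1),L^2(\Omega))$ by letting $Dg$ be the very weak solution of $(A+\mu I)w=0$ in $\Omega$, $w=g$ on $\Gamma_1$, $w=0$ on $\Gamma_2$. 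Elliptic regularity for $L^2$ Dirichlet data gives $Dg\in H^{1/2}(\Omega)\subset L^2(\Omega)$, and Green's formula furnishes the identity I shall rely on throughout,
\[
\langle Dg,(A+\mu I)\zeta\rangle=-\int_{\Gamma_1}g\,\frac{\partial\zeta}{\partial n}\,d\sigma,\qquad \zeta\in D(A).
\]

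For existence I would argue one window at a time. On $[iT,(i+1)T)$ I set $y=z+Dv_i$; since $Dv_i$ is time independent and $(A+\mu I)Dv_i=0$, the unknown $z$, which vanishes on $\Gamma_1\cup\Gamma_2$, solves the homogeneous-boundary problem $z_t=-Az+\mu Dv_i$ with datum $z(iT)=y(iT)-Dv_i$ and $L^2(\Omega)$-forcing $\mu Dv_i$. Standard analytic-semigroup theory then yields the unique mild solution
\[
z(t)=e^{-(t-iT)A}\big(y(iT)-Dv_i\big)+\mu\int_{iT}^{t}e^{-(t-s)A}Dv_i\,ds,
\]
so that $y=z+Dv_i\in C([iT,(i+1)T];L^2(\Omega))$. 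Starting from $y(0)=\tilde y_0$ and feeding the endpoint value $y((i+1)T^-)$ as initial datum of the next window produces $y\in C([0,\infty);L^2(\Omega))$, bounded on every bounded time interval (finitely many windows, each controlled by the semigroup bound), which is what is needed to evaluate (\ref{e2001}) for a fixed $\tau$.

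It then remains to check that this $y$ satisfies (\ref{e2001}). Fixing $\tau$ and an admissible $\zeta$, I would split $\int_0^\tau$ into windows, insert $y=z+Dv_i$, and integrate by parts in $t$ against $z_t=-Az+\mu Dv_i$: the $z$-terms reproduce the interior integral and, thanks to the continuity of $y$ across the sampling instants, the time-endpoint contributions telescope into $\langle y(\tau),\zeta(\tau)\rangle-\langle\tilde y_0,\zeta(0)\rangle$; the $Dv_i$-terms are converted by the Green identity above into exactly the boundary integral $-\int_0^\tau\int_{\Gamma_1}v\,\frac{\partial\zeta}{\partial n}\,d\sigma$, the auxiliary $\mu\langle Dv_i,\zeta\rangle$ contributions cancelling against the $\mu Dv_i$ forcing carried by $z$. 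I expect the genuine obstacle to be precisely this low boundary regularity: because $v_i$ lies only in $L^2(\Gamma_1)$, the trace and lift must be read in the very weak ($H^{1/2}$) sense and the term $\int_{\Gamma_1}v\,\partial_n\zeta$ must be handled through $D$ and the Green identity rather than by direct trace estimates. Once this bookkeeping is set up the remaining manipulations are routine, and the piecewise-constancy in time is what removes the usual difficulty associated with merely $L^2$-in-time Dirichlet control.

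Uniqueness I would get by duality. If $y_1,y_2$ are two solutions and $w=y_1-y_2$, subtracting the two identities (\ref{e2001}) cancels the data and leaves $-\int_0^\tau\langle\zeta_t-A\zeta,w\rangle\,dt+\langle w(\tau),\zeta(\tau)\rangle=0$ for every admissible $\zeta$ (with the test interval read as $[0,\tau]$). Given $g\in D(A)$, the backward choice $\zeta(t)=e^{-(\tau-t)A}g$ satisfies $\zeta_t-A\zeta=0$ and $\zeta(\tau)=g$, and it is admissible because $g\in D(A)$ keeps $\|A\zeta(t)\|=\|e^{-(\tau-t)A}Ag\|$ bounded on $[0,\tau]$, so $\zeta\in W^{1,2}([0,\tau];L^2(\Omega))\cap L^2(0,\tau;D(A))$ — the regularity that would fail for a general $g\in L^2(\Omega)$. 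The identity then collapses to $\langle w(\tau),g\rangle=0$ for all $g$ in the dense subspace $D(A)$, whence $w(\tau)=0$; since $\tau$ is arbitrary, $w\equiv0$.
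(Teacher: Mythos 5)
Your proposal is correct and takes essentially the same route as the paper: existence by lifting the piecewise-constant boundary datum with a stationary Dirichlet map (your $(A+\mu I)Dv_i=0$ is the paper's $(k+A)\psi=0$), reducing each sampling window to a homogeneous-boundary Cauchy problem solved by the semigroup and glued by continuity of $y$ at the sampling instants; and uniqueness by the duality argument with the backward equation $\zeta_t-A\zeta=0$, $\zeta(t)=e^{-(\tau-t)A}g$, using density of $D(A)$. The only difference is cosmetic: the paper approximates $\tilde y(\tau)$ by elements $\tilde h_n\in D(A)$ and passes to the limit, whereas you test against all $g$ in the dense set $D(A)$ directly.
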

\begin{proof}
The solution can be equivalently defined as: $y\in C_b([0, +\infty); L^2(\Omega))$ satisfies, for every $i\in \mathbb{N}$ and every $\tau\in [iT, (i+1)T]$, every $\zeta\in W^{1,2}([0,\tau]; L^2(\Omega))\cap L^2((0, \tau); D(A))$,
\begin{eqnarray}\label{e2002}
&&-\int_{iT}^\tau\int_\Omega(\zeta_t-A\zeta)ydxdt-\int_{iT}^\tau\int_{\Gamma_1}v\frac{\partial\zeta}{\partial n}dxdt\nonumber\\
&&+\int_\Omega y(\tau)\zeta(\tau)dx-\int_\Omega y(iT)\zeta(iT)dx=0.
\end{eqnarray}
Consider the equation (\ref{e2002}) firstly on the interval $[0,T]$, i.e., $i=0$. We prove firstly the uniqueness of the solution. Suppose that $y_1$ and $y_2$ are two solutions. Let $\tilde{y}=y_1-y_2$. Then, for every $\tau\in [0, T]$, one has
\begin{equation}\label{e2003}
-\int_{0}^\tau\int_\Omega(\zeta_t-A\zeta)\tilde{y}dxdt+\int_\Omega \tilde{y}(\tau)\zeta(\tau)dx=0.
\end{equation}
Let $\tilde{h}_n\in D(A)$ and $\tilde{h}_n\rightarrow \tilde{y}(\tau)$ in $L^2(\Omega)$, and take $\zeta\in  W^{1,2}([0,\tau]; L^2(\Omega))\cap L^2((0, \tau); D(A))$ satisfying that
\begin{equation}\label{e2004}
\zeta_t-A\zeta=0, t\in(0, \tau);\ \  \zeta(\tau)=\tilde{h}_n.
\end{equation}
Then, it follows from (\ref{e2003}) that $\int_\Omega \tilde{y}(\tau)\tilde{h}_ndx=0$. Letting $n\rightarrow \infty$, we obtain that $\int_\Omega |\tilde{y}(\tau)|^2dx=0$, which implies that $\tilde{y}(\tau)=0, \forall \tau\in [0, T]$. Hence, the solution is unique.

Now, we prove the existence. Let $v^0(t)=v_0, \forall t\in[0,T]$. It is well-know that for sufficiently large $k>0$, the solution to the equation
 \begin{equation}\label{e2005}
k\psi- \Delta \psi+ f_y(x,y_e)\psi=0,\ \mathrm{in}\ \Omega;\ \  \psi=v^0\ \mathrm{on}\ \Gamma_1, \psi=0 \ \mathrm{ on}\ \Gamma_2,
\end{equation}
exists, and $\psi\in C^1([0,T]; L^2(\Omega))$. Let $z\in C([0,T]; L^2(\Omega))$ be the solution to the equation
\begin{equation}\label{e2006}
z_t+Az=k\psi, t\in(0, \tau);\ \  z(0)=y_0-\psi(0).
\end{equation}
Then, by a direct calculation, one can show that $y=z+\psi\in C([0, T]; L^2(\Omega))$ satisfies the equation (\ref{e2002}) with $i=0$. This implies the existence of the solution. Moreover,
we can see that $\|y(\tau)\|\leq C(\|v_0\|+\|y_0\|), \forall \tau\in [0,T]$. Step by step, we can show that there exists unique solution $y\in C_b([0, +\infty); L^2(\Omega))$,  for every $i\in \mathbb{N}$, satisfying (\ref{e2002}).
\end{proof}

\subsection{The stabilization of the linearized equation}
We introduce firstly the feedback. Let $\rho<\gamma_1<\gamma_2<\cdots<\gamma_N$ be $N$ constants, where $\rho$ is given by (\ref{e202}). It is not difficult to show that, $\exists\sigma>0$, such that $\forall w\in D(A)$,
$$\langle\sum_{i=1}^N(\frac{e^{-\lambda_iT}-e^{-\gamma_kT}}{\int_0^Te^{-\lambda_is}ds}-\lambda_i)\langle w, \phi_i\rangle\phi_i+ Aw, w\rangle\geq \sigma \|w\|_1^2.$$
Then, for each $k\in\{1, 2, \cdots, N\}$, the solution to the equation
\begin{equation}\label{e203}
\left\{\begin{array}{ll}
\sum_{i=1}^N(\frac{e^{-\lambda_iT}-e^{-\gamma_kT}}{\int_0^Te^{-\lambda_is}ds}-\lambda_i)\langle\psi_k, \phi_i\rangle\phi_i-\Delta\psi_k-f_y(x,y_e)\psi_k=0,  \\ \ \mathrm{in} \ \Omega,
\psi_k=v, \mathrm{on}\ \Gamma_1, \psi_k=0 \ \mathrm{ on}\ \Gamma_2,
\end{array}\right.
\end{equation}
exists for any given $v\in L^2(\Gamma_1)$. We shall denote by $D_{\gamma_k}$ the map $:v\rightarrow \psi_k(\cdot)$, i.e., $\psi_k(\cdot)=D_{\gamma_k} v$. It is known that $\psi_k\in H^{1/2}(\Omega)$ and $\|\psi_k\|_{1/2}\leq C|v|_{0}$ (see \cite{18}).

We introduce the matrices
\begin{equation}\label{e204}
\Lambda_{\gamma_k}:=\mathrm{diag}(\frac{\int_0^Te^{-\lambda_is}ds}{e^{-\lambda_iT}-e^{-\gamma_kT}})_{1\leq i\leq N}, \Lambda=\sum_{k=1}^N\Lambda_{\gamma_k},
\end{equation}
 and
\begin{equation}\label{e205}
B_0=\left(\langle\frac{\partial\phi_i}{\partial n}, \frac{\partial\phi_j}{\partial n}\rangle_0 \right)_{1\leq i, j\leq N}, B=(B_1+B_2+\cdots+B_N)^{-1},
\end{equation}
where $B_k=\Lambda_{\gamma_k}B_0\Lambda_{\gamma_k}, k=1,2, \cdots, N$. Following the method in \cite{09} one can show that $B_1+B_2+\cdots+B_N$ is invertible.

Now, we give the feedback $F: L^2(\Omega)\rightarrow L^2(\Gamma_1)$ by
\begin{eqnarray}\label{e206}
&&F(w)=\mathds{1}_{\Gamma_1}\left\langle\Lambda BQ_N(w), \frac{\partial\mathbf{\Phi}^N}{\partial n}\right\rangle_N.
\end{eqnarray}
where $\frac{\partial\mathbf{\Phi}^N}{\partial n}=(\frac{\partial\phi_1}{\partial n}, \frac{\partial\phi_2}{\partial n}, \cdots, \frac{\partial\phi_N}{\partial n})'\in (L^2(\partial\Omega))^N$, and $\mathds{1}_{\Gamma_1}: L^2(\partial\Omega)\rightarrow L^2(\Gamma_1)$ is the restrictive operator. We shall denote by $F_k: L^2(\Omega)\rightarrow L^2(\Gamma_1), k=1, 2, \cdots, N$ the operators
\begin{eqnarray}\label{e206}
&&F_k(w)=\mathds{1}_{\Gamma_1}\left\langle \Lambda_{\gamma_k}BQ_Nw, \frac{\partial\mathbf{\Phi}^N}{\partial n}\right\rangle_N.
\end{eqnarray}
Then $F=\sum_{k=1}^NF_k$. The sampled data feedback control we design is given by
\begin{equation}\label{e207}
v(t)=\sum_{i=0}^{\infty}\chi_{[iT,(i+1)T)}(t)Fy(iT).
\end{equation}
\begin{remark}
The feedback $F$ we constructed here depends on the sampling period $T$, since the matrices $\Lambda_{\gamma_k}, k=1, 2, \cdots, N$, depend on $T$. We can see that, when the sampling period $T\rightarrow 0$, the elements in $\Lambda_{\gamma_k}$ satisfy  $\frac{\int_0^Te^{-\lambda_is}ds}{e^{-\lambda_iT}-e^{-\gamma_kT}}\rightarrow \frac{1}{\gamma_k-\lambda_i}$. Therefore, $\Lambda_{\gamma_k}\rightarrow \Lambda^0_{\gamma_k}$ when $T\rightarrow 0$, where $\Lambda^0_{\gamma_k}=\mathrm{diag}\{\frac{1}{\gamma_k-\lambda_i}\}_{1\leq i\leq N}$. Write $\Lambda^0=\sum_{k=1}^N\Lambda^0_{\gamma_k}, B^0=(\sum_{k=1}^N\Lambda^0_{\gamma_k}B_0\Lambda^0_{\gamma_k})^{-1}$. The feedback $F$ defined in (\ref{e206}) will tend to
$F^0: L^{\Omega}\rightarrow L^2(\Gamma_1)$, which is defined as follows
$$F^0(w)=\mathds{1}_{\Gamma_1}\left\langle\Lambda^0 B^0Q_N(w), \frac{\partial\mathbf{\Phi}^N}{\partial n}\right\rangle_N.$$
The above feedback is exactly the same as that given in \cite{09} and \cite{091}, which was used to construct a continuous-time boundary feedback control. 
\end{remark}
The following result amounts to saying that the feedback $v(t)$ achieves global exponential stability of the linearized system. More precisely,
\begin{theorem}\label{th202}
Assume that $\tilde{y}_0\in L^2(0,1)$. The feedback control $v(t)$, given by (\ref{e207}), exponentially stabilizes the linearized equation. More exactly, there exist constants $C>0$, such that the solution to equation
\begin{equation}\label{e208}
 \left\{\begin{array}{ll}
 \frac{\partial y}{\partial t}= \Delta y+ f_y(x,y_e)y, \ \ \ \ \mathrm{in} \ \mathbb{R}^+\times\Omega,\\
 y=\sum_{i=0}^{\infty}\chi_{[iT,(i+1)T)}(t)Fy(iT)\ \mathrm{on}\ \mathbb{R}^+\times\Gamma_1, \\
  y=0 \ \mathrm{ on}\ I\times\Gamma_2, y(0,x)=\tilde{y}_0(x),\ \ \ \mathrm{in} \
  \Omega.
\end{array}\right.
\end{equation}
 satisfies
\begin{equation}\label{e209}
\|y(t)\|\leq Ce^{-\rho t}\|y(0)\|, \forall t\geq 0.
\end{equation}
\end{theorem}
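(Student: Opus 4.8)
The plan is to split the solution $y(t)$ into its projection $y_N(t)=Q_Ny(t)\in\mathbb{R}^N$ onto the finite‑dimensional unstable subspace $X_N=\mathrm{span}\{\phi_j\}_{j\le N}$ and the complementary stable part $(I-P_N)y(t)$, and to treat the two by different mechanisms: exact discrete pole placement for the former and semigroup decay for the latter. I would first extract the mode equations directly from the weak formulation (\ref{e2002}) by testing with $\zeta(t,x)=\theta(t)\phi_j(x)$; using $A\phi_j=\lambda_j\phi_j$, $\phi_j|_{\partial\Omega}=0$ and the fact that the boundary integral only sees $\Gamma_1$, the term $\int_{\Gamma_1}v\,\partial_n\zeta$ becomes $\langle v,\frac{\partial\phi_j}{\partial n}\rangle_0$, so each coefficient $a_j(t)=\langle y(t),\phi_j\rangle$ obeys, on every sampling interval $[iT,(i+1)T)$ where $v\equiv v_i:=Fy(iT)$ is constant, the scalar ODE $\dot a_j=-\lambda_j a_j-\langle v_i,\frac{\partial\phi_j}{\partial n}\rangle_0$. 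Collecting $j=1,\dots,N$ and inserting the feedback (\ref{e206})--(\ref{e207}) (so that $v_i=\sum_{l\le N}(\Lambda BQ_Ny(iT))_l\frac{\partial\phi_l}{\partial n}$) turns the forcing into $B_0\Lambda B\,y_N(iT)$; integrating the resulting linear system over one period yields the closed‑loop sampled map $y_N((i+1)T)=M\,y_N(iT)$ with
\[
M=e^{-\Lambda_\lambda T}-D_TB_0\Lambda B,\qquad \Lambda_\lambda=\mathrm{diag}(\lambda_i)_{i\le N},\ \ D_T=\mathrm{diag}\Big(\int_0^Te^{-\lambda_is}ds\Big)_{i\le N}.
\]

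The decisive algebraic step is the identity $D_T\Lambda_{\gamma_k}^{-1}=e^{-\Lambda_\lambda T}-e^{-\gamma_kT}I$, immediate from (\ref{e204}). Combined with $\Lambda=\sum_k\Lambda_{\gamma_k}$, $B^{-1}=\sum_kB_k$ (the matrices $B_k$ from (\ref{e205})) it gives $D_TB_0\Lambda_{\gamma_k}=(e^{-\Lambda_\lambda T}-e^{-\gamma_kT}I)B_k$, hence $D_TB_0\Lambda B=e^{-\Lambda_\lambda T}-(\sum_ke^{-\gamma_kT}B_k)B$, and therefore $M$ collapses to
\[
M=\Big(\sum_{k=1}^Ne^{-\gamma_kT}B_k\Big)\Big(\sum_{k=1}^NB_k\Big)^{-1}=:WS^{-1}.
\]
Since $B_0$ is a Gram matrix it is symmetric positive semidefinite, so each $B_k=\Lambda_{\gamma_k}B_0\Lambda_{\gamma_k}\succeq0$ and $S=\sum_kB_k\succ0$ (its invertibility is exactly the fact cited after (\ref{e205})). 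Then $M$ is similar to the symmetric matrix $S^{-1/2}WS^{-1/2}$, and from $e^{-\gamma_NT}S\preceq W\preceq e^{-\gamma_1T}S$ I read off that all eigenvalues of $M$ lie in $[e^{-\gamma_NT},e^{-\gamma_1T}]$ and that $\|M\,\cdot\|_S\le e^{-\gamma_1T}\|\cdot\|_S$ in the $S$‑weighted norm. As $\gamma_1>\rho$, this gives $|Q_Ny(iT)|_N\le Ce^{-\gamma_1 iT}\|y(0)\|$, which I extend to all $t$ by solving the ODE over a partial period.

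For the stable part I would use the lift adapted to $D_{\gamma_k}$. On $[iT,(i+1)T)$ put $\Psi_i:=D_{\gamma_1}v_i$; by (\ref{e203}) its image $A\Psi_i=-\sum_{l\le N}\mu_{l,1}\langle\Psi_i,\phi_l\rangle\phi_l$ lies in $X_N$, and the elliptic estimate after (\ref{e203}) gives $\|\Psi_i\|\le\|\Psi_i\|_{1/2}\le C|v_i|_0\le C|Q_Ny(iT)|_N$. Writing $y=\Psi_i+z$ on the interval (as in the proof of Theorem \ref{th201}) yields $z_t+Az=-A\Psi_i\in X_N$, so applying $I-P_N$ kills the forcing and leaves $(I-P_N)y(t)=(I-P_N)\Psi_i+e^{-A(t-iT)}(I-P_N)(y(iT)-\Psi_i)$. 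The analytic semigroup restricted to the spectral subspace obeys $\|e^{-At}(I-P_N)\|\le e^{-\lambda_{N+1}t}$ with $\lambda_{N+1}\ge\rho$. Reading this at sampling instants produces, for $a_i:=\|(I-P_N)y(iT)\|$, the recursion $a_{i+1}\le e^{-\lambda_{N+1}T}a_i+C|Q_Ny(iT)|_N$; since the inhomogeneity decays like $e^{-\gamma_1iT}$ with $\gamma_1>\rho$ and $e^{-\lambda_{N+1}T}\le e^{-\rho T}$, solving the recursion (the borderline case $\lambda_{N+1}=\gamma_1$ producing only a factor $i$, absorbed into the strict gap $\gamma_1>\rho$) gives $a_i\le Ce^{-\rho iT}\|y(0)\|$, again extended to all $t$ by the interval estimate. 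Combining the two parts via $\|y(t)\|^2=|Q_Ny(t)|_N^2+\|(I-P_N)y(t)\|^2$ yields (\ref{e209}).

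The main obstacle I anticipate is the stable part, not the pole‑placement computation. The coefficientwise energy balance for the high modes contains the boundary coupling $\sum_{j>N}a_j\langle v_i,\frac{\partial\phi_j}{\partial n}\rangle_0$, which need not converge absolutely when $d>1$, so a direct estimate is unavailable. The lift $\Psi_i=D_{\gamma_1}v_i$ is precisely what removes this difficulty, because its defining equation (\ref{e203}) forces $A\Psi_i$ into the finite‑dimensional space $X_N$, turning the stable‑part equation into a genuinely homogeneous $L^2$ parabolic problem after projection. The remaining work is to justify rigorously, starting from the $L^2$‑weak formulation of Definition \ref{def01}, the variation‑of‑constants representation of $y$ on each interval and the uniform bound $\|\Psi_i\|\le C|v_i|_0$, and to verify that the special lift indeed makes $A\Psi_i$ an honest element of $X_N\subset L^2(\Omega)$.
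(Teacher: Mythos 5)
Your proposal is correct, and it reaches exactly the paper's key object by a different route. The paper never works with $\mathbf{y}^N=Q_Ny$ directly: it lifts globally, setting $z=y-\sum_{j=1}^N D_{\gamma_j}v_j$, derives the impulse evolution system (\ref{e212})/(\ref{e219}) with jump conditions at the sampling instants and the bookkeeping identity $\mathbf{y}^N(iT)=\frac{1}{2}\mathbf{z}^N(iT)$ of (\ref{e216}), and only then obtains the sampled transition matrix $\sum_{k=1}^N e^{-\gamma_kT}B_kB$ in (\ref{e222}) by combining the flow over one period with the jump. You instead integrate the mode ODEs for $\mathbf{y}^N$ itself and collapse $M=e^{-A_NT}-D_TB_0\Lambda B$ to the same matrix $\sum_k e^{-\gamma_kT}B_kB$ via the identity $D_T\Lambda_{\gamma_k}^{-1}=e^{-A_NT}-e^{-\gamma_kT}I$ and $\sum_kB_kB=I$; from there both proofs use the same spectral fact (positive semidefiniteness of $B^{1/2}B_kB^{1/2}$ summing to the identity, the paper's (\ref{e223})--(\ref{e225})). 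For the stable part the paper keeps the global lifted variable and does step-by-step Duhamel with the jump terms, whereas you re-lift on each sampling interval with the single map $D_{\gamma_1}$, which makes the projected equation genuinely homogeneous and reduces everything to a scalar recursion; both handle the near-resonance $\lambda_{N+1}$ versus $\gamma_1$ the same way, using only $\gamma_1>\rho$. What your version buys is transparency: no impulse system, no factor-of-two identities, and an explicit statement that the design places the sampled closed-loop spectrum in $[e^{-\gamma_NT},e^{-\gamma_1T}]$. What the paper's version buys is reusability: the lifted variable $z$ and the system (\ref{e212}) are precisely what Section 3 recycles for the nonlinear equation, so its formulation feeds directly into Theorem \ref{th301}.

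Two small corrections, neither fatal. First, the contraction holds in the norm $|B^{1/2}\cdot|_N=|S^{-1/2}\cdot|_N$ (this is the paper's weighted norm), not in the $S$-weighted norm as you wrote: with $P=S^{-1/2}WS^{-1/2}$ one has $M^i=S^{1/2}P^iS^{-1/2}$ and $\|P\|\leq e^{-\gamma_1T}$, which is what actually yields $|Q_Ny(iT)|_N\leq Ce^{-\gamma_1 iT}|Q_Ny(0)|_N$; the bound in the literal $S$-norm does not follow from $W\preceq e^{-\gamma_1T}S$ alone. Second, your sign $\dot a_j=-\lambda_ja_j-\langle v_i,\frac{\partial\phi_j}{\partial n}\rangle_0$ is the correct one (it is what Green's formula with the outward normal gives, and it matches the paper's own identity (\ref{e214})), but note that testing the weak formulation (\ref{e2002}) exactly as printed produces the opposite sign on the boundary term --- the paper's Definition \ref{def01} carries a sign slip that its later computations silently correct, and your derivation inherits the corrected convention rather than the printed one. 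Since the stability of $M$ hinges entirely on this sign, it deserves an explicit remark in a final write-up.
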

\begin{proof}
We firstly lift the boundary. Write
\begin{eqnarray}\label{e210}
v_k(t)=\sum_{i=0}^{\infty}\chi_{[iT,(i+1)T)}(t)F_k(y(iT)),
\end{eqnarray}
for each $k=1, 2, \cdots, N $. Denote by $h_k$ the solution to equation (\ref{e203}) with boundary value $v=v_k$, i.e.
\begin{equation}\label{e211}
h_k=D_{\gamma_k}v_k, k=1, 2, \cdots, N.
\end{equation}
Set $z(t,x)=y(t,x)-\sum_{j=1}^Nh_j(t,x)$. Denote by $\mathbf{y}^N$, $\mathbf{z}^N$ and $\mathbf{h_j}^N$ the respective vectors $Q_Ny$, $Q_Nz$ and $Q_Nh_j$.
Involving equation (\ref{e203}) and the definition of $\phi_i$, by simple calculation, we can get that, for $1\leq i, k\leq N$,
\begin{equation}\label{e214}
\langle h_k, \phi_i\rangle=-\frac{\int_0^Te^{-\lambda_is}ds}{e^{-\lambda_iT}-e^{-\gamma_kT}}\langle v_k(t), \frac{\partial\phi_i}{\partial n}\rangle_0.
\end{equation}
With this identity, and the definition of $v_k(t)$, it follows that
\begin{equation}\label{e215}
\mathbf{h}_k^N(t)=-\sum_{i=0}^{\infty}\chi_{[iT,(i+1)T)}(t)B_kB\mathbf{y}^N(iT).
\end{equation}
By the latter equation, and the relation between $y$ and $z$, one can obtain that
\begin{equation}\label{e216}
\mathbf{y}^N(iT)=\frac{1}{2}\mathbf{z}^N(iT), \forall i=0, 1,  \cdots,
\end{equation}
and, for $k=1, 2, \cdots, N$,
 \begin{equation}\label{e218}
\mathbf{h}_k^N(t)=-\frac{1}{2}\sum_{i=0}^{\infty}\chi_{[iT,(i+1)T)}(t)B_kB\mathbf{z}^N(iT).
\end{equation}
Moreover, by the definition of $v_k$, we have
\begin{eqnarray}\label{e217}
v_k(t)=\sum_{i=0}^{\infty}\chi_{[iT,(i+1)T)}(t)\tilde{F}_k(z(iT)).
\end{eqnarray}
where $\tilde{F}_k: L^2(\Omega) \rightarrow L^2(\Gamma_1)$ is the operator given by
 \begin{equation}\label{e2117}
\tilde{F}_k(w)=\frac{1}{2}\mathds{1}_{\Gamma_1}\sum_{i=0}^{\infty}\chi_{[iT,(i+1)T)}(t)\langle \Lambda_{\gamma_k}BQ_Nw, \frac{\partial \Phi^N}{\partial n}\rangle_N.
\end{equation}

It is not difficult to see that $z$ is the solution to the following impulse evolution equation with homogeneous Dirichlet boundary condition
\begin{equation}\label{e212}
 \left\{\begin{array}{ll}
 \frac{\partial z}{\partial t}+\mathcal{A}z(t)=R(h_1, h_2, \cdots, h_N), \mathrm{in}\ \mathbb{R}^+\setminus\cup_{i=0}^\infty\{iT\},\\
 z(iT)=z(iT-)+\sum_{j=1}^Nh_j((i-1)T)-\sum_{j=1}^Nh_j(iT),\\
\ \ i=0, 2,\cdots,\ \ z(0)=y(0)-\sum_{j=1}^Nh_j(0).
\end{array}\right.
\end{equation}
Here
\begin{eqnarray}\label{e213}
R(h_1, h_2, \cdots, h_N)=\sum_{i, k=1}^N(\frac{e^{-\lambda_iT}-e^{-\gamma_kT}}{\int_0^Te^{-\lambda_is}ds}-\lambda_i)\langle h_k, \phi_i\rangle\phi_i
\end{eqnarray}
We shall write this term for simplicity by $R$ in the following. The second identity of (\ref{e212}) holds because $y$ is continuous, and by the definition of $h_k$, it can be understood in the equivalent manner:
\begin{eqnarray}\label{e2113}
z(iT)=(I-\sum_{k=1}^ND_{\gamma_k}F_k)(z(iT-)+\sum_{j=1}^Nh_j((i-1)T))).
 \end{eqnarray}

Denote by $A_N$ the diagonal matrix $\mathrm{diag}(\lambda_i)_{1\leq i\leq N}$. Multiplying equation (\ref{e211}) by $\phi_1, \phi_2, \cdots, \phi_N$ respectively, and using identity (\ref{e215}) and (\ref{e218}), one can obtain the equation satisfied by $\mathbf{z}^N$ as follows,
\begin{equation}\label{e219}
 \left\{\begin{array}{ll}
 \frac{d}{dt}\mathbf{z}^N(t)+A_N\mathbf{z}^N(t) \\
= -\frac{1}{2}\sum_{i=0}^{\infty}\chi_{[iT,(i+1)T)}(t)\sum_{k=1}^N(\Lambda_{\gamma_k}^{-1}-A_N)B_kB\mathbf{z}^N(iT),\\
 \ \ \ \ \ \ \ \ t\in(0, +\infty)\setminus\cup_{i=1}^\infty\{iT\},\\
 \mathbf{z}^N(iT)=2\mathbf{z}^N(iT-)-\mathbf{z}^N((i-1)T),\ \ i=1, 2,\cdots\\ \mathbf{z}^N(0)=2\mathbf{y}^N(0).
\end{array}\right.
\end{equation}
Then, for each $i\in \mathbb{N}$, by a first equation in (\ref{e219}) on the interval $[iT, (i+1)T)$, we can obtain by a direct calculation that
\begin{eqnarray}\label{e220}
&&\mathbf{z}^N(((i+1)T)-)\nonumber\\
&=&\frac{1}{2}\sum_{k=1}^N(e^{-\gamma_kT}+1)B_kB\mathbf{z}^N(iT).
\end{eqnarray}
It follows by the above equation and the second equation in (\ref{e219}) that
\begin{eqnarray}\label{e222}
\mathbf{z}^N((i+1)T)=\sum_{k=1}^Ne^{-\gamma_kT}B_kB\mathbf{z}^N(iT), i=0, 1, 2, \cdots.
\end{eqnarray}
Since for each $k=1, 2, \cdots, N$, the matrix $(e^{-\gamma_1T}-e^{-\gamma_kT})B^{1/2}B_kB^{1/2}$ is semipositive, we know that the sum of them is also semipositive. Notice that  $\lambda_{max}(\sum_{k=1}^Ne^{-\gamma_1T}B^{1/2}B_kB^{1/2})=e^{-\gamma_1T}$,  it follows that (see \cite{19} )
\begin{eqnarray}\label{e223}
\lambda_{max}(\sum_{k=1}^Ne^{-\gamma_kT}B^{1/2}B_kB^{1/2})\leq e^{-\gamma_1T}.
\end{eqnarray}
This together with (\ref{e222}) imply that, for each $i\in \mathbb{N}$
\begin{eqnarray}\label{e224}
|B^{1/2}\mathbf{z}^N((i+1)T)|_N\leq e^{-\gamma_1 T}|B^{1/2}\mathbf{z}^N(iT)|_N.
\end{eqnarray}
Hence, for $ i=1, 2, \cdots,$
 \begin{eqnarray}\label{e225}
&&|\mathbf{z}^N(iT)|_N
\leq C|B^{1/2}\mathbf{z}^N(iT)|_N\nonumber\\
&&\leq Ce^{-\gamma_1i T}|B^{1/2}\mathbf{z}^N(0)|_N
\leq Ce^{-\gamma_1 iT}|\mathbf{z}^N(0)|_N.
\end{eqnarray}
The constant $C$ in the right hand side represent the ratio of $\lambda_{max}(B^{1/2})$ to  $\lambda_{min}(B^{1/2})$. It depends on $N, \Omega$, $\omega$ and the set of values $\{\gamma_k\}_{k=1}^N$.

Now, for each $t>0$, there exists $i\in \mathbb{N}^+$, such that $t\in [iT, (i+1)T)$, and
 \begin{eqnarray}\label{e226}
|\mathbf{z}^N(t)|_N&=& e^{-A_N (t-iT)}\mathbf{z}^N(iT)\nonumber\\
&&-\frac{1}{2}\int_{iT}^{t}\sum_{k=1}^Ne^{-A_N (t-s)}ds(\Lambda_{\gamma_k}^{-1}-A_N)B_kB\mathbf{z}^N(iT)\nonumber\\
&\leq& C|\mathbf{z}^N(iT)|_N
\leq Ce^{-\gamma_1i T}|\mathbf{z}^N(0)|_N\nonumber\\
&\leq& Ce^{-\gamma_1t}|\mathbf{z}^N(0)|_N.
\end{eqnarray}
Hence, the first $N$ modes of $z$ is stable. It follows immediately by the definition of $h_j, j=1, 2, \cdots, N$, that
\begin{equation}\label{e227}
\|h_j(t)\|\leq Ce^{-\gamma_1t}|\mathbf{z}^N(0)|_N, \forall t>0.
 \end{equation}

Now we consider the high frequency part $\mathbf{z}^s(t)=(I-P_N)z$. Denote by $A_s=(I-P_N)A$. Given $t>0$, there exists $i\in \mathbb{N}^+$, such that $t\in [iT, (i+1)T)$, then, we can see from equation (\ref{e212}) that
 \begin{eqnarray}\label{e228}
&&\mathbf{z}^s(t)=e^{-A_s (t-iT)}\mathbf{z}^s(iT)+\int_{iT}^{t}e^{-A_s (t-s)}(I-P_N)Rds\nonumber\\
&&= e^{-A_s (t-(i-1)T)}\mathbf{z}^s((i-1)T)+\int_{(i-1)T}^{t}e^{-A_s (t-s)}(I-P_N)Rds\nonumber\\
&&+e^{-A_s (t-iT)}[\sum_{j=1}^N(I-P_N)(h_j((i-1)T)-h_j(iT))].
\end{eqnarray}
Do step by step as above, it follows that
 \begin{eqnarray}\label{e229}
&&\mathbf{z}^s(t)=e^{-A_s t}\mathbf{z}^s(0) +\int_{0}^{t}e^{-A_s (t-s)}(I-P_N)Rds\nonumber\\
&&+\sum_{k=1}^ie^{-A_s (t-kT)}[\sum_{j=1}^N(I-P_N)(h_j((k-1)T)-h_j(kT))].\nonumber\\
\ \
\end{eqnarray}
Using (\ref{e227}), and notice that $\rho<\min\{\gamma_1, \lambda_{N+1}\}$, we can obtain that
\begin{eqnarray}\label{e230}
&&\|\mathbf{z}^s(t)\|\nonumber\\
&\leq& e^{-\lambda_{N+1} t}\|\mathbf{z}^s(0)\|+Ce^{-\rho t}\sum_{k=1}^ie^{-(\gamma_1-\rho)kT}|\mathbf{z}^N(0)|_N\nonumber\\
&&+ C\int_{0}^{t}e^{-\rho (t-s)}e^{-\gamma_1s}ds\nonumber\\
&\leq& Ce^{-\rho t}\|z(0)\|, \forall t>0.
\end{eqnarray}
This,  together with (\ref{e226}) imply that
\begin{equation}\label{e231}
\|z(t)\|\leq Ce^{-\rho t}\|z(0)\|, \forall t>0.
 \end{equation}
Finally, by (\ref{e227}), (\ref{e231}) and the relation between $y$ and $z$, we obtain the estimate
\begin{eqnarray}\label{e224}
\|y(t)\|\leq C e^{-\rho t}\|y(0)\|, t\geq 0.
\end{eqnarray}
 This completes the proof of Theorem \ref{th201}.
\end{proof}

\section{Stabilization of nonlinear equation}
\setcounter{equation}{0}
Let $\varepsilon\in (0, 1/2)$ be an arbitrarily given number.    Assume that
$$(H_f) \ \ \ |f(x, y+y_e)-f(x, y_e)-f_y(x, y_e)y|\leq C\sum_{i=1}^l|y|^{p_i},$$
where $l$ is a positive integer, $p_1\leq p_2\leq \cdots\leq  p_l$, and $p_i, i=1, 2, \cdots, l$ satisfy that
$$0<p_i<1/\varepsilon, \mathrm{if}\ n=1;\ 0<p_i<\frac{n+1+2\varepsilon}{n-1+2\varepsilon}, \mathrm{if}\ n>1.$$

 For the stabilization of the nonlinear parabolic equation, we have the following result.
\begin{theorem}\label{th301}
Given $0<\mu<\rho$, under assumption $(H_f)$, when $y_0\in H^{1/2-\varepsilon}(\Omega)$ and $\|y_0-y_e\|_{1/2-\varepsilon}$ small enough, the feedback $u$, given by (\ref{e207}), locally stabilizes the equation (\ref{e101}). More exactly, there exist constants $C>0$,  and $\delta>0$, such that for all $y_0\in H^{1/2-\varepsilon}(\Omega)$ satisfying $\|y_0-y_e\|_{1/2-\varepsilon}\leq \delta$, the solution to equation
\begin{equation}\label{e301}
 \left\{\begin{array}{ll}
 \frac{\partial y}{\partial t}= \Delta y+ f(x,y),  \ \ \ \mathrm{in} \ (0,\infty)\times\Omega,\\
 y=\sum_{i=0}^{\infty}\chi_{[iT,(i+1)T)}(t)F(y(iT)-y_e)+y_e,\\
 \mathrm{on}\ (0,\infty)\times\Gamma_1,  y=0 \ \mathrm{ on}\ (0,\infty)\times\Gamma_2,\\
  y(0,x)=y_0(x),\ \ \ \mathrm{in} \
  \Omega.
\end{array}\right.
\end{equation}
 satisfies
\begin{equation}\label{e302}
\|y(t)-y_e\|_{1/2-\varepsilon}\leq Ce^{-\mu t}\|y_0-y_e\|_{1/2-\varepsilon}, t\geq 0.
\end{equation}
\end{theorem}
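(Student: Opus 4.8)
The plan is to reduce the nonlinear problem to a perturbation of the stabilized linear system of Theorem \ref{th202}, and then to close a fixed-point argument in the weighted space dictated by the target rate $\mu$. First I would set $w=y-y_e$. Since $y_e$ is an equilibrium (so that $\Delta y_e+f(\cdot,y_e)=0$ in $\Omega$, $y_e=0$ on $\Gamma_2$), subtracting the equilibrium equation shows that $w$ solves exactly the closed-loop sampled-data system (\ref{e208}) — homogeneous Dirichlet data on $\Gamma_2$ and feedback $\sum_i\chi_{[iT,(i+1)T)}(t)F(w(iT))$ on $\Gamma_1$ — but now forced by the nonlinear remainder
$$g(w):=f(\cdot,w+y_e)-f(\cdot,y_e)-f_y(\cdot,y_e)\,w,$$
which by $(H_f)$ obeys $|g(w)|\le C\sum_{i=1}^{l}|w|^{p_i}$. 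Thus (\ref{e302}) is equivalent to exponential decay of $w$ in the $H^{1/2-\varepsilon}$-norm, and the whole difficulty is concentrated in controlling $g$.

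Next I would express $w$ by variation of constants against the linear closed-loop flow. Let $S(t,s)$ denote the evolution family of the linear sampled-data system; Theorem \ref{th202} yields the $L^2$-decay $\|S(t,s)\phi\|\le Ce^{-\rho(t-s)}\|\phi\|$. Coupling this with the analytic smoothing of the underlying homogeneous semigroup $e^{-At}$, which gains $2\theta$ spatial derivatives at the cost of $t^{-\theta}$ with $\theta=\tfrac14-\tfrac{\varepsilon}{2}<1$, I would upgrade it to $\|S(t,s)\phi\|_{1/2-\varepsilon}\le C(t-s)^{-\theta}e^{-\rho(t-s)}\|\phi\|$ and write $w(t)=S(t,0)w_0+\int_0^t S(t,s)g(w(s))\,ds$. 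The state space $H^{1/2-\varepsilon}$ is the natural one here, because the Dirichlet lifts $D_{\gamma_k}v$ entering the feedback lie in $H^{1/2}(\Omega)$ with $\|D_{\gamma_k}v\|_{1/2}\le C|v|_0$, so $1/2-\varepsilon$ is the largest regularity the boundary feedback can sustain.

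For the nonlinear estimate I would use the Sobolev embedding $H^{1/2-\varepsilon}(\Omega)\hookrightarrow L^{q}(\Omega)$ with $q=\tfrac{2n}{\,n-1+2\varepsilon\,}$ when $n>1$, giving $\big\||w|^{p_i}\big\|_{L^{q/p_i}}=\|w\|_{L^q}^{p_i}\le C\|w\|_{1/2-\varepsilon}^{p_i}$, and pair this with the $L^{q/p_i}\!\to H^{1/2-\varepsilon}$ smoothing bound for $S(t,s)$. The hypothesis $0<p_i<\tfrac{n+1+2\varepsilon}{n-1+2\varepsilon}$ (respectively $p_i<1/\varepsilon$ for $n=1$) is precisely the subcriticality condition that keeps the resulting time-singularity exponent strictly below one, so that the Duhamel integral converges and contributes a factor superlinear in $\sup_s\|w(s)\|_{1/2-\varepsilon}$. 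I would then run a contraction mapping on the weighted ball $\{w\in C([0,\infty);H^{1/2-\varepsilon}):\sup_{t\ge0}e^{\mu t}\|w(t)\|_{1/2-\varepsilon}\le R\}$, taking $\mu<\rho$ so the linear decay has room to absorb the perturbation; for $\delta$ and $R$ small enough the map $w\mapsto S(\cdot,0)w_0+\int_0^\cdot S(\cdot,s)g(w(s))\,ds$ sends the ball into itself and is a contraction, and its fixed point is the desired solution satisfying (\ref{e302}).

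The hardest part will be making the smoothing estimate for $S(t,s)$ rigorous in the fractional norm: because the feedback is frozen on each interval $[iT,(i+1)T)$ and jumps at the sampling instants, $S(t,s)$ is \emph{not} a genuine semigroup and analytic-semigroup smoothing cannot be quoted directly. I would instead route everything through the lifting of Theorem \ref{th202}, writing $w=z+\sum_j h_j$, where $z$ solves the impulsive homogeneous-Dirichlet equation (\ref{e212}) driven by the analytic semigroup $e^{-At}$ and the lifts $h_j=D_{\gamma_k}v_k$ are controlled by the already-established modal decay (\ref{e227}). The nonlinear forcing $g(w)$ then has to be carried through this decomposition, using $\|D_{\gamma_k}v\|_{1/2}\le C|v|_0$ to keep each piece in $H^{1/2-\varepsilon}$ and to verify that the jump terms at the sampling times do not destroy the integrability of the Duhamel representation.
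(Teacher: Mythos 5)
Your structural reduction is the same as the paper's Step 1: subtract $y_e$, regard $g(w)=f(\cdot,w+y_e)-f(\cdot,y_e)-f_y(\cdot,y_e)w$ as a forcing of the linear closed-loop dynamics, and lift the boundary through the maps $D_{\gamma_k}$; your subcritical Sobolev estimate for $g$ is also exactly the paper's duality estimate $\|\Delta^{-1/4-\varepsilon/2}z^{p_i}\|\leq C\|z\|_{1/2-\varepsilon}^{p_i}$. Where you genuinely diverge is the closure. You build a Duhamel representation against the closed-loop evolution family $S(t,s)$ and run a Banach fixed point in the ball $\{\sup_t e^{\mu t}\|w(t)\|_{1/2-\varepsilon}\leq R\}$. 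The paper never constructs $S(t,s)$ or any weighted-norm contraction. Instead it assumes local existence and runs a priori estimates on the solution itself: Step 2 proves that on a finite window $[0,KT]$ the norm stays below $\eta$ and is bounded by $L_1^{K}e^{\varphi(\eta)KT/2}\|z(0)\|_{1/2-\varepsilon}$ with $\varphi(\eta)=C_5\sum_i\eta^{2(p_i-1)}$; Step 3 combines the exact modal recursion $\mathbf{z}^N((i+1)T)=\sum_k e^{-\gamma_kT}B_kB\,\mathbf{z}^N(iT)+\int Q_Ng\,dt$ with high-frequency analytic smoothing to obtain (\ref{e315}), i.e. $\|z(KT)\|_{1/2-\varepsilon}\leq \bigl(L_2e^{-\rho KT}+C_3(K)e^{\varphi(\eta)KT/2}\varphi(\eta)\bigr)\|z(0)\|_{1/2-\varepsilon}$, and then closes by choosing first $K$ large (so the linear term is below $\tfrac12 e^{-\mu KT}$) and then $\eta$ small (so the nonlinear term is too), iterating block by block over $[iKT,(i+1)KT]$. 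This buys the paper two things: it needs no mapping properties of the non-semigroup family $S(t,s)$ in fractional norms, and it needs only the \emph{size} bound in $(H_f)$.

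That second point is the concrete gap in your plan. A contraction argument requires an estimate on differences, $\|g(w_1)-g(w_2)\|\lesssim(\|w_1\|^{p-1}+\|w_2\|^{p-1})\|w_1-w_2\|$ in the relevant norms, and $(H_f)$ is a pure growth bound on $g(w)$ itself; no Lipschitz-type control of $g$ follows from it or from assumption (i). With the stated hypotheses you can show your map sends the ball into itself, but not that it contracts, so the fixed point step fails as written. You would need to strengthen the hypothesis (e.g. $|f_y(x,y+y_e)-f_y(x,y_e)|\leq C\sum_i|y|^{p_i-1}$), or switch to a Schauder-type fixed point plus a separate uniqueness argument, or do what the paper does and estimate solutions directly. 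Your other flagged difficulty — the smoothing bound $\|S(t,s)\phi\|_{1/2-\varepsilon}\leq C(t-s)^{-\theta}e^{-\rho(t-s)}\|\phi\|$ — is real, and your proposed repair (route through $z$ and the lifts $h_j\in H^{1/2}(\Omega)$, controlled by the finite-dimensional modal decay (\ref{e227})) is precisely the paper's decomposition (\ref{e212}), so carrying it out would absorb most of the paper's Steps 2--3 anyway. Finally, note that both your argument and the paper's implicitly need $p_i>1$: otherwise neither $\varphi(\eta)\to0$ as $\eta\to0$ nor your small-$R$ superlinear gain holds, even though $(H_f)$ as written only requires $p_i>0$.
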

\begin{proof}
We divide the proof into the following four steps.

{\bf Step 1. Translate the equation into an equation with homogeneous Dirichlet boundary condition.} We do the substitution $y-y_e\rightarrow y$. Then, to prove inequality (\ref{e302}) for the solution to (\ref{e301}), it suffices to prove that there exist constants $C>0, \mu>0$ and $\delta>0$, such that for all $H^{1/2-\varepsilon}(\Omega)$ satisfying $\|y(0)\|_{1/2-\varepsilon}\leq \delta$, the the solution to equation
\begin{equation}\label{e303}
 \left\{\begin{array}{ll}
 \frac{\partial y}{\partial t}= \Delta y+ f_y(x,y_e)y+f(x,y+y_e)-f(x,y_e) \\  \ \ \  \ \ \ -f_y(x,y_e)y,\ \ \ \mathrm{in} \ (0,\infty)\times\Omega,\\
 y=\sum_{i=0}^{\infty}\chi_{[iT,(i+1)T)}(t)Fy(iT),
 \mathrm{on}\ (0,\infty)\times\Gamma_1,\\  y=0 \ \mathrm{ on}\ (0,\infty)\times\Gamma_2,
  y(0)=y_0-y_e,\ \mathrm{in} \
  \Omega,
\end{array}\right.
\end{equation}
satisfies
\begin{equation}\label{e304}
\|y(t)\|_{1/2-\varepsilon}\leq Ce^{-\mu t}\|y(0)\|_{1/2-\varepsilon}, \forall t\geq 0.
\end{equation}
Let $z(t,x)= y(t,x)-\sum_{j=1}^Nh_j(t,x)$, where $h_j=D_{\gamma_j}v_j$, and $v_j=\sum_{i=0}^{\infty}\chi_{[iT,(i+1)T)}(t)F_j(y(iT))$. Then, by (\ref{e217}), it follows that
\begin{eqnarray}\label{e305}
y(t,x)&=&z(t,x) +\sum_{j=1}^Nh_j(t,x)\nonumber\\
&=&z(t,x)+\sum_{i=0}^{\infty}\chi_{[iT,(i+1)T)}(t)\sum_{j=1}^N\tilde{F}_j(z(iT)).
\end{eqnarray}
Here $\tilde{F}_j, j=1, 2, \cdots $  are defined in (\ref{e2117}). Then, the equation of $y$ is equivalent to the following equation of $z$,
\begin{equation}\label{e306}
 \left\{\begin{array}{ll}
 \frac{\partial z}{\partial t}+Az= g(z+\sum_{i=1}^{\infty}\chi_{[iT,(i+1)T)}(t)\sum_{j=1}^N\tilde{F}_j(z(iT))), \\  \ \ \  \ \ \ +R(h_1, h_2, \cdots, h_N)  \ \ \ \mathrm{in} \ \mathbb{R}^+\setminus\{iT\}_{i=1}^\infty\times\Omega,\\
  z(0,x)=y_0(x)-\sum_{j=1}^NF_j(y(0)),\ \ \ \mathrm{in} \
  \Omega.\\
  z(iT)=z(iT-)+\sum_{j=1}^Nh_j((i-1)T)\\
  \ \ \ \ \ \ -\sum_{j=1}^Nh_j(iT),\ \ i=1, 2,\cdots,
\end{array}\right.
\end{equation}
where $g(y)=f(\cdot,y+y_e)-f(x,y_e)-f_y(x,y_e)y$.

{\bf Step 2. The well-posedness and the a-priori estimates.} It is known that the equation (\ref{e306}) admits local solution under Assumption $(H_f)$. We claim that:

\emph{For any given positive integer $K$, and positive real number $\eta$, there must be a number $\delta=\delta(K, \eta)>0$, such that when $\|z(0)\|_{1/2-\varepsilon}\leq \delta$, the solution to equation (\ref{e306}) exists on $[0, KT]$, and it satisfies that
\begin{equation}\label{e307}
\|z(t)\|_{1/2-\varepsilon}\leq \eta.
\end{equation}}
To prove the above claim, we firstly take $\delta< \min\{1, \eta\}$, and let $T_0=\inf\{t; \|z(t)\|_{1/2-\varepsilon}\geq \eta\}$. Our aim now is to choose $\delta$ small enough to avoid that $T_0<KT$ happens.

Suppose $T_0\in (0, T)$. Arbitrarily given $t\in (0, T_0]$. Multiplying the equation (\ref{e306}) by $\Delta^{1/2-\varepsilon}z$,  integrating on $\Omega\times (0, t)$, and applying the Cauchy-Schwarz inequality, we can obtain that
\begin{eqnarray}\label{e308}
&&\|z(t)\|_{1/2-\varepsilon}^2+\int_0^t \|\Delta^{3/4-\varepsilon/2}z(s)\|^2ds\nonumber\\
&\leq& C_1\|z(0)\|_{1/2-\varepsilon}+C_2\int_0^t\|z(s)\|^2_{1/2-\varepsilon}ds\nonumber\\
&&+\int_0^t\|\Delta^{-1/4-\varepsilon/2} g(z+\sum_{j=1}^Nh_j)\|^2 ds\nonumber\\
&\leq&  C_1\|z(0)\|_{1/2-\varepsilon}+C_2\int_0^t\|z(s)\|^2_{1/2-\varepsilon}ds\nonumber\\
&& + C_3\int_0^t \sum_{i=1}^l[\|z(s)\|^{2p_i}_{1/2-\varepsilon}+\sum_{j=1}^N\|h_j\|_{1/2-\varepsilon}^{2p_i}]ds\nonumber\\
&\leq& C_4(\|z(0)\|^2_{1/2-\varepsilon}+ \sum_{i=1}^l\|z(0)\|_{1/2-\varepsilon}^{2p_i})\nonumber\\
&&+C_5\sum_{i=1}^l\int_0^t \|z(s)\|^{2p_i}_{1/2-\varepsilon}ds\nonumber\\
&\leq& L_0\|z(0)\|^2_{1/2-\varepsilon}+\varphi(\eta)\int_0^t \|z(s)\|^2_{1/2-\varepsilon}ds,
\end{eqnarray}
where $L_0$ is a constant and $\varphi(\eta)=C_5\sum_{i=1}^l\eta^{2(p_i-1)}$. In the proof of (\ref{e308}), we used the fact that $\|h_j\|_{1/2-\varepsilon}\leq C \|z(0)\|_{1/2-\varepsilon}$, which follows from the definition of $h_j, j=1, \cdots, N$, and we also used the fact that, $\forall p_i, i=1, \cdots, l$,
\begin{eqnarray*}
&&\|\Delta^{-1/4-\varepsilon/2}z^{p_i}\|
\leq \sup\{|\langle z^{p_i}, \varphi \rangle|; \|\Delta^{1/4+\varepsilon/2}\varphi\|\leq 1\}\nonumber\\
&&\leq C \sup\{|\langle z^{p_i}, \varphi \rangle|; \|\varphi\|_{L^{p^*}(\Omega)}\leq 1\}\leq  C\|z^{p_i}\|_{L^{q^*}(\Omega)}\nonumber\\
&& \leq C\|z\|_{1/2-\varepsilon}^{p_i}, \forall z\in H^{1/2-\varepsilon}(\Omega).
\end{eqnarray*}
Here $p^*=\infty, q^*=1$ when $n=1$, and $p^*=\frac{2n}{n-1-2\varepsilon}, q^*=\frac{2n}{n+1+2\varepsilon}$ when $n>1$. The above inequalities follows by Sobolev imbedding Theorem and hypothesis $(H_f)$. We can infer by (\ref{e308}) and Gronwall's inequality that
\begin{equation}\label{e309}
\|z(t)\|_{1/2-\varepsilon}\leq L_0e^{\varphi(\eta)T/2}\|z(0)\|_{1/2-\varepsilon}, \forall t\in (0, T_0].
\end{equation}
Take $\delta_1:=\frac{\eta}{2L_0}e^{-\varphi(\eta)T/2}$, we see from (\ref{e309}) that, when $\delta\leq \delta_1$, $\|z(T_0)\|_{1/2-\varepsilon}\leq L_0e^{\varphi(\eta)T/2}\delta_1<\eta$. This leads to contradiction because $\|z(T_0)\|_{1/2-\varepsilon}\geq \eta$ by the definition of $T_0$. Hence, $T_0\geq T$ when $\delta\leq \delta_1$.

Suppose now that $T_0=T$. By the third equation in (\ref{e306}) and the equivalent form (\ref{e2113}), we see that
$$\|z(T)\|_{1/2-\varepsilon}\leq C_1\|z(T-)\|_{1/2-\varepsilon}+C_2\|z(0)\|_{1/2-\varepsilon}.$$
This together with inequality (\ref{e309}) imply that, there exists constant $L_1>\max\{1,L_0\}$ such that
\begin{equation}\label{e310}
\|z(T)\|_{1/2-\varepsilon}\leq L_1e^{\varphi(\eta)T/2}\|z(0)\|_{1/2-\varepsilon}.
\end{equation}
Take $\delta_2:=\frac{\eta}{2L_1}e^{-\varphi(\eta)T/2}$,  when $\delta\leq \delta_2$, we see from the above inequality that
 $\|z(T_0)\|_{1/2-\varepsilon}\leq L_1e^{\varphi(\eta)T/2}\delta_2<\eta$. This also leads to contradiction. Hence $T_0>T$ when $\delta\leq \delta_2$.

Following the same arguments as above, we can prove that, for any given $i\in \mathbb{N}$, when $\|z(iT)\|_{1/2-\varepsilon}\leq\delta_2$, it must holds that $\|z(t)\|_{1/2-\varepsilon}<\min\{\eta, L_1e^{\varphi(\eta)T/2}\|z(iT)\|_{1/2-\varepsilon}\}, \forall t\in [iT, (i+1)T]$. Therefore, we can see that if we take
\begin{equation}\label{e311}
\delta=\delta_{K, \eta}:=\frac{\eta}{2L_1^K}e^{-\varphi(\eta)KT/2},
\end{equation}
then $\|z(t)\|_{1/2-\varepsilon}<\eta, \forall t\in[0,KT]$, or equivalently, $T_0\geq KT$. The claim is proven. Moreover, we can infer from the above proof that, when $\|z(0)\|_{1/2-\varepsilon}\leq\delta_{K, \eta}$, the solution to equation (\ref{e306}) satisfies that
\begin{equation}\label{e312}
\|z(t)\|_{1/2-\varepsilon}\leq  L_1^Ke^{\varphi(\eta)KT/2}\|z(0)\|_{1/2-\varepsilon}, \forall t\in [0, KT].
\end{equation}

{\bf Step 3. The stability of the nonlinear closed-loop system} Write $\mathbf{z}^N(t)=Q_Nz(t)$, $z^s(t)=(I-P_N)z(t)$. Then, using the same argument for getting (\ref{e220}) and (\ref{e222}), we can obtain that, for each $i=0, 1, 2, \cdots$,
\begin{eqnarray*}
\mathbf{z}^N((i+1)T)&=&\sum_{k=1}^Ne^{-\gamma_kT}B_kB\mathbf{z}^N(iT)\\
&+&\int_{iT}^{(i+1)T}e^{-A_N((i+1)T-t)}Q_Ng(z+\sum_{j=1}^Nh_j)dt.
\end{eqnarray*}
It follows that
\begin{eqnarray*}
|B^{1/2}\mathbf{z}^N((i+1)T)|&\leq& e^{-\gamma_1T}|B^{1/2}\mathbf{z}^N(iT)|_N\\
&+&C\int_{iT}^{(i+1)T}|Q_Ng(z+\sum_{j=1}^Nh_j)|_Ndt.
\end{eqnarray*}
Then, for any given $K\in \mathbb{N}^+$, we see that, $\forall i=0, 1, 2, \cdots, K-1$,
\begin{eqnarray*}
|B^{1/2}\mathbf{z}^N((i+1)T)|&\leq& e^{-\gamma_1(i+1)T}|B^{1/2}\mathbf{z}^N(iT)|_N\\
&+&C(K)\int_{0}^{(i+1)T}|Q_Ng(z+\sum_{j=1}^Nh_j)|_Ndt.
\end{eqnarray*}
where $C(K)$ is a constant depending on $K$. This implies that
\begin{eqnarray}\label{e313}
&&\|P_Nz((i+1)T)\|_{1/2-\varepsilon}\leq  C_Ne^{-\gamma_1(i+1)T}\|P_Nz(0)\|_{1/2-\varepsilon}\nonumber\\
&&+C_1(K)\int_{0}^{(i+1)T}\|g(z+\sum_{j=1}^Nh_j)\|_{1/2-\varepsilon}dt.
\end{eqnarray}
On the other hand, the high frequency part $z_s$ can be written as
\begin{eqnarray*}
&&\mathbf{z}^s(KT)=e^{-A_s KT}\mathbf{z}^s(0)\nonumber\\&&+\sum_{i=1}^Ke^{-A_s (KT-iT)}[\sum_{j=1}^N((I-P_N))(h_j((i-1)T)-h_j(iT))]\nonumber\\
&&+ \int_{0}^{KT}e^{-A_s (KT-t)}(I-P_N)Rdt\nonumber\\&&+\int_{0}^{KT}e^{-A_s (KT-t)}(I-P_N)g(z+\sum_{j=1}^Nh_j)dt.
\end{eqnarray*}
Notice that $\|h_j(iT)\|_{1/2-\varepsilon}\leq C\|P_Nz(iT)\|_{1/2-\varepsilon}, \forall j=1, \cdots, N, i=0, 1, \cdots, N$,  we can infer by the above equality, the inequality (\ref{e313})and the same arguments for getting (\ref{e230})that
\begin{eqnarray}\label{e314}
\|\mathbf{z}^s(KT)\|_{1/2-\varepsilon}&\leq&C_se^{-\rho KT}\|z(0)\|_{1/2-\varepsilon}\nonumber\\&+&C_2(K)\int_{0}^{KT}\|g(z+\sum_{j=1}^Nh_j)\|_{1/2-\varepsilon}dt.
\end{eqnarray}
Letting $i=K-1$ in (\ref{e313}), it follows from (\ref{e312}), (\ref{e313}) and (\ref{e314}) that, when $\|z(0)\|_{1/2-\varepsilon}\leq \delta_{K, \eta}$ ($\delta_{K, \eta}$ is defined in (\ref{e311})),
\begin{eqnarray}\label{e315}
\|z(KT)\|_{1/2-\varepsilon}&\leq& L_2e^{-\rho KT}\|z(0)\|_{1/2-\varepsilon}\nonumber\\&+&C_3(K)e^{\varphi(\eta)KT/2}\varphi(\eta)\|z(0)\|_{1/2-\varepsilon}.
\end{eqnarray}
Now, we can take $K$ large enough such that $L_2e^{-\rho KT}\leq \frac{1}{2}e^{-\mu KT}$, and take $\eta$ small enough such that $C_3(K)e^{\varphi(\eta)KT/2}\varphi(\eta)\leq \frac{1}{2}e^{-\mu KT}$. Then, for such $K$ and $\eta$, when $\|z(0)\|_{1/2-\varepsilon}\leq \delta_{K, \eta}$, the following inequality holds
\begin{eqnarray}\label{e316}
\|z(KT)\|_{1/2-\varepsilon}&\leq& e^{-\mu KT}\|z(0)\|_{1/2-\varepsilon}.
\end{eqnarray}
By the same arguments for deriving (\ref{e312}) and (\ref{e316}), we can infer that, $\|z(t)\|_{Z}\leq C\|z(iKT)\|_{Z}, \forall t\in (iKT, (i+1)KT]$, and $\|z((i+1)KT)\|_Z\leq e^{-\mu KT}\|z(iKT)\|_{Z}$, $\forall i\in \mathds{N}^+$.  Now, arbitrarily given $t\in(0, +\infty)$, there must be $i\in \mathbb{N}^+$, such that $t\in (iKT, (i+1)KT]$, and we can infer that
\begin{eqnarray}\label{e317}
&&\|z(t)\|_{1/2-\varepsilon}\leq Ce^{-\mu KT}\|z(iKT)\|_{1/2-\varepsilon}\nonumber\\
&&\ \ \ \ \ \  \leq Ce^{-\mu iKT}\|z(0)\|_{1/2-\varepsilon}\leq \tilde{C}e^{-\mu t}\|z(0)\|_{1/2-\varepsilon}.
\end{eqnarray}

\end{proof}








\end{document}